\newtheorem{theorem}{Theorem}[section]
\newtheorem{lemma}[theorem]{Lemma}
\newtheorem{proposition}[theorem]{Proposition}
\def\C{\mathbb{C}}
\def\N{\mathbb{N}}
\def\Z{\mathbb{Z}}
\def\Q{\mathbb{Q}}
\begin{document}

\baselineskip=17pt

\title[Diophantine equations with Euler polynomials]{Diophantine equations with Euler polynomials}

\author[D. Kreso]{Dijana Kreso}
\address{Institut f\"ur Analysis und Computational Number Theory
(Math A)\\ Technische Universit\"at Graz\\ Steyrergasse 30/II\\
8010 Graz, Austria}
\email{kreso@math.tugraz.at}

\author[Cs. Rakaczki]{Csaba Rakaczki}
\address{Institute of Mathematics\\ University of Miskolc\\
H-3515 Miskolc Campus, Hungary}
\email{matrcs@uni-miskolc.hu}

\date{}

\begin{abstract}
In this paper we determine possible decompositions of Euler polynomials $E_k(x)$, i.e. possible ways of writing Euler polynomials as a functional composition of polynomials of lower degree. Using this result together with the well-known criterion of Bilu and Tichy,
we prove that the Diophantine equation
$$-1^k +2 ^k - \cdots + (-1)^{x} x^k=g(y),$$
with $\deg(g) \geq 2$ and $k\geq 7$, has only finitely many integers solutions $x, y$ unless polynomial $g$ can be decomposed in ways that we list explicitly.
\end{abstract}

\subjclass[2010]{Primary 11D41; Secondary 11B68}

\keywords{Euler polynomials, Higher degree equations}

\maketitle

\section{Introduction}
If $K$ is a field and $g(x), h(x)\in K[x]$, then $f=g\circ h$ is a functional composition of $g$ and $h$ and $(g, h)$ is a (functional) {\it decomposition} of $f$ (over $K$). The decomposition is {\it nontrivial} if $g$ and $h$ are of degree at least $2$. A polynomial is said to be {\it indecomposable} if it is of degree at least $2$ and does not have a nontrivial decomposition. Given $f(x)\in K[x]$ with $\deg f>1$, a {\it complete decomposition} of $f$ is a decomposition $f=f_1\circ f_2 \cdots \circ f_m$, where polynomials $f_i\in K[x]$ are indecomposable for all $i=1, 2, \ldots, m$.  Two decompositions $f=g_1\circ h_1=g_2\circ h_2$ are said to be {\it equivalent} over $K$ if there exists a linear polynomial $\ell \in K[x]$ such that $g_2=g_1\circ \ell$ and $h_1=\ell \circ h_2$. Complete decomposition of a polynomial of degree greater than $1$ clearly always exists, but it does not need to be unique. In 1922, J. F. Ritt \cite{R22} proved that any two complete decomposition of $f\in \C[x]$ consist of the same number of indecomposable polynomials and moreover that the sequence of degrees of  polynomials in a complete decomposition of $f$ is uniquely determined by $f$, up to permutation. This result is known in literature as Ritt's first theorem. For more on the topic of polynomial decomposition we refer to \cite{S04}.

Ritt's polynomial decomposition results have been applied to a variety of topics. One such topic is the classification of polynomials $f$ and $g$ with rational coefficients such that the equation $f(x)=g(y)$ has infinitely many integer solutions. In 2000, Bilu and Tichy \cite{BT00} presented a complete and definite answer to this question. In the past decade the theorem of Bilu and Tichy has been applied to various Diophantine equations. For example, in \cite{BBKPT02} it is shown that the equation 
$$1^m + 2^m + \cdots +x^m=1^n + 2^n + \cdots +y^{n}$$
has only finitely many integer solutions $x, y$, provided $m, n\geq 2$ and $m\neq n$. In \cite{R04} Rakaczki investigated the question of the finiteness of the number of integer solutions $x, y$ of the equation 
$$1^m + 2^m + \cdots +x^m=g(y)$$
with an arbitrary $g(x)\in \Q[x]$. We mention that the study of Diophantine equations involving power sums of consecutive integers has a long history, dating back to the work of Sch\"{a}ffer in 1956, see \cite{S56}. In the present paper we study a related problem.

The purpose of this paper is to characterize those $g\in \Q[x]$ for which the diophantine equation 
\begin{equation}\label{EQ}
-1^k +2 ^k - \cdots + (-1)^{x} x^k=g(y)
\end{equation}
has infinitely many integer solutions. It is well known, see for instance \cite{AS72}, that the following relation holds:
\begin{equation*} \label{euler}
-1^k+2^k-3^k+\cdots +(-1)^n n^k=\frac{E_k(0)+(-1)^n E_k(n+1)}{2},
\end{equation*}
where $E_k(x)$ denotes the $k$-th Euler polynomial, which is defined by the following generating function:
$$\sum_{k=0}^{\infty}E_k(x)\frac{t^k}{k!}=\frac{2\exp(tx)}{\exp(t)+1}.$$
In the present paper we give a complete description of decompositions of Euler polynomials into polynomials with complex coefficients. Since Euler polynomials appear in many classical results and play an important role in various approximation and expansion formulas in discrete mathematics and in number theory (see for instance \cite{AS72}, \cite{B69}), we find that our result on Euler polynomials might be of broader interest.

\begin{theorem}\label{EP}
Euler polynomials $E_k(x)$ are indecomposable for all odd $k$.
If $k=2m$ is even, then every nontrivial decomposition of $E_k(x)$
over complex numbers is equivalent to
\begin{equation}\label{Euler}
E_k(x)=\widetilde{E}_m\left(\left(x-\frac{1}{2}\right)^{2}\right),\ 
\textnormal{where}\  \widetilde{E}_m(x)=\sum_{j=0}^{m}{2m\choose2j}\frac{E_{2j}}{4^{j}}x^{m-j}
\end{equation}
and $E_j$ is the $j$-th Euler number defined by $E_j=2^j E_j(1/2)$.
In particular, the polynomial $\widetilde{E}_m(x)$ is
indecomposable for any $m\in\N$.
\end{theorem}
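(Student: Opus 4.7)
The plan is to first exhibit the decomposition \eqref{Euler} directly from the reflection symmetry of the Euler polynomials, then to classify all nontrivial decompositions of $E_k$ via that same symmetry, and finally to prove the indecomposability of $\widetilde{E}_m$ by a separate structural argument.

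\emph{Existence.} The identity $E_k(1-x) = (-1)^k E_k(x)$, after the substitution $x = 1/2 + u$, shows that for $k = 2m$ the polynomial $E_{2m}(1/2 + u)$ is an even polynomial in $u$ of degree $2m$, hence equals $\widetilde{E}_m(u^2)$ for a unique polynomial $\widetilde{E}_m$ of degree $m$. Replacing $u$ by $x - 1/2$ gives $E_{2m}(x) = \widetilde{E}_m((x - 1/2)^2)$. The explicit formula for $\widetilde{E}_m$ then follows from the Taylor expansion of $E_{2m}$ at $x = 1/2$, using $E_n'(x) = n E_{n-1}(x)$, the vanishing $E_{2j-1}(1/2) = 0$, and the definition $E_{2j} = 2^{2j} E_{2j}(1/2)$; reindexing by $j \mapsto m-j$ yields the displayed form.

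\emph{Reduction to $(x-1/2)^2$.} Suppose $E_k = g \circ h$ is a nontrivial decomposition. By the equivalence relation on decompositions I may assume that $h$ is monic and $h(1/2) = 0$. The reflection then becomes $g(h(1-x)) = (-1)^k g(h(x))$, a fiber condition on the pair $(h(x), h(1-x))$. The key step is to deduce from this, via a Ritt-style analysis of the reducibility of $g(y) \mp g(z)$, that $h(1-x) = \pm h(x)$. For $k$ odd the sign-anti-invariant case $h(1-x) = -h(x)$ can be excluded by comparing leading coefficients of $g(h(x))$ and $E_k(x)$, so $E_k$ is indecomposable. For $k = 2m$ even one obtains $h(1-x) = h(x)$, so $h(x) = h_1((x - 1/2)^2)$ for some polynomial $h_1$; substituting back and comparing with the decomposition from the previous step gives $g \circ h_1 = \widetilde{E}_m$, so every nontrivial decomposition of $E_{2m}$ factors through $(x-1/2)^2$.

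\emph{Indecomposability of $\widetilde{E}_m$ and conclusion.} Suppose $\widetilde{E}_m = G \circ H$ is a nontrivial decomposition with $\deg G = a$, $\deg H = b$, $ab = m$. Then $\widetilde{E}_m$ has at most $(a-1) + (b-1) \leq m/2$ distinct finite critical values. On the other hand, the critical points of $\widetilde{E}_m$ correspond via $v = u^2$ to the positive zeros of $E_{2m-1}(1/2 + u)/u$, and the critical values of $\widetilde{E}_m$ at these points coincide (up to reindexing) with the values $E_{2m}(1/2 + u_i)$ at the nontrivial zeros $u_i$ of $E_{2m-1}(1/2 + u)$. I would show that these $m - 1$ values are pairwise distinct, contradicting the bound $m/2$ whenever $m \geq 3$; hence $\widetilde{E}_m$ is indecomposable. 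Combined with the previous step this forces $h_1$ to be linear, and consequently every nontrivial decomposition of $E_{2m}$ is equivalent to \eqref{Euler}. The main obstacle is the Ritt-style reduction that produces $h(1-x) = \pm h(x)$ together with the accompanying arithmetic fact that the critical values of $\widetilde{E}_m$ are pairwise distinct; both require nontrivial input beyond the formal manipulations outlined above.
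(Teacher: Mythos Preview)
Your outline has two genuine gaps, and the overall route differs substantially from the paper's.

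\textbf{The odd case is not settled.} After the Ritt/Lemma~\ref{EQUIV} step you correctly arrive (for odd $k$, with the normalization $g$ monic and $a_{t-1}=0$) at $h(1-x)=-h(x)$ and $g(-y)=-g(y)$. But ``comparing leading coefficients of $g(h(x))$ and $E_k(x)$'' does not exclude this: $h$ monic with $h(1-x)=-h(x)$ forces $\deg h$ odd, $g$ odd forces $\deg g$ odd, and then $g\circ h$ is monic of odd degree, perfectly consistent with $E_k$. Nothing collapses. The paper breaks this case with a \emph{different} identity, $E_n(x)-E_n(-x)=2x^n$ for odd $n$ (a consequence of $E_n(x+1)+E_n(x)=2x^n$ and the reflection), which forces $h(x)-h(-x)$ to divide $2x^n$ and hence $b_{k-2}=0$; this contradicts the explicit value $b_{k-2}=-(t-1)k^2/8$ obtained from the normalization in Lemma~\ref{LM}. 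Your plan uses only the reflection identity and so never produces this contradiction.

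\textbf{The even case and the indecomposability of $\widetilde{E}_m$.} First, you silently drop the subcase $h(1-x)=-h(x)$ with $g$ even, which the Ritt step also allows when $k$ is even; the paper reduces it to the main subcase. Second, your proposed proof that $\widetilde{E}_m$ is indecomposable rests on the claim that its $m-1$ critical values are pairwise distinct. You flag this yourself as an obstacle, and indeed it is: it amounts to showing that the values $E_{2m}(\alpha)$ at the nontrivial zeros of $E_{2m-1}$ are distinct modulo the pairing $\alpha\leftrightarrow 1-\alpha$, which is not a standard fact and is not supplied. The paper avoids this entirely: using the identity $E_n(x+1)+E_n(x)=2x^n$ it shows that $h(-x-1)-h(x)$ divides $(x+1)^n-x^n$, and then Lemma~\ref{DR} (all real factors of $(x+1)^n-x^n$ have coefficients of one sign) combined with the explicit values of $b_{k-1},b_{k-2},b_{k-3}$ forces $\deg h=2$ directly. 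Indecomposability of $\widetilde{E}_m$ then falls out as a corollary, not as an input.

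In short, the reflection symmetry alone is not enough; the paper's argument hinges on the second functional equation $E_n(x+1)+E_n(x)=2x^n$ and on the sign lemma for factors of $(x+1)^n-x^n$, neither of which appears in your plan.
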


Theorem \ref{EP} together with the aforementioned criterion of Bilu and Tichy enables us to prove the following theorem.

\begin{theorem}\label{THM}
Let $k\geq 7$ be an integer and $g(x)\in\Q[x]$ with
$\deg g \geq 2$. Then the Diophantine equation \eqref{EQ}
has only finitely many integer solutions unless we are in one of the following cases
\begin{itemize}
 \item[ i)]
$g(x)=f\left(E_k(p(x))\right)$,
 \item[ii)]
$g(x)=f\left(\widetilde{E}_s\left(p(x)^2\right)\right)$,
 \item[iii)]
$g(x)=f\left(\widetilde{E}_{s}\left(\delta(x)p(x)^{2}\right)\right)$,
\item[iv)]
$g(x)=f\left(\widetilde{E}_{s}\left(\gamma\delta(x)^{t}\right)\right)$,
\item[v)]
$g(x)=f\left(\widetilde{E}_{s}\left(\left(a\delta(x)^{2}+b\right)p(x)^{2}\right)\right)$,
\end{itemize}
where $a,b,\gamma \in \Q\setminus\{0\}$, $t\geq 3$ odd, $E_k(x)$ is the $k$-th Euler polynomial,  $p(x)\in \Q[x]$, $\delta(x)\in \Q[x]$ is a linear polynomial,
$$f(x)=\pm \frac{x}{2} +\frac{E_k(0)}{2} \quad \textnormal{and} \quad \widetilde{E}_s(x)=\sum_{j=0}^{s}{2s\choose
2j}\frac{E_{2j}}{4^j}x^{s-j}.$$
\end{theorem}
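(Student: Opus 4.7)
My plan is to reduce \eqref{EQ} to a hypothesis of the Bilu--Tichy criterion and then exploit Theorem~\ref{EP}. First, using the well-known identity
$$-1^k+2^k-\cdots+(-1)^x x^k=\frac{E_k(0)+(-1)^x E_k(x+1)}{2},$$
I split the integer solutions of \eqref{EQ} according to the parity of $x$. This reduces the problem to studying when $f^{\pm}(x)=g(y)$ has infinitely many integer solutions, where $f^{\pm}(x)=\pm\tfrac{1}{2}E_k(x+1)+\tfrac{1}{2}E_k(0)$. Infinitely many solutions of \eqref{EQ} force infinitely many for at least one choice of sign, so it suffices to treat each of $f^{+}(x)=g(y)$ and $f^{-}(x)=g(y)$ separately.

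Next, the Bilu--Tichy theorem yields linear polynomials $\lambda,\mu\in\Q[x]$, a polynomial $\varphi\in\Q[x]$, and a standard pair $(F,G)$ with $f^{\pm}=\varphi\circ F\circ\lambda$ and $g=\varphi\circ G\circ\mu$. Since $f^{\pm}$ is affine-equivalent to $E_k$, Theorem~\ref{EP} together with Ritt's first theorem pins down the decomposition $\varphi\circ F\circ\lambda$ up to equivalence: either $F$ is linear (so $\varphi$ is essentially $f^{\pm}$); or $\varphi$ is linear and $F$ is essentially $E_k$; or $k=2s$ is even, in which case the decomposition $E_k=\widetilde{E}_s\circ(x-1/2)^2$ supplies two further admissible shapes, namely $F(y)=y^2$ with $\varphi$ absorbing $\widetilde{E}_s$, and $F=\widetilde{E}_s\circ\mathrm{square}$ with $\varphi$ linear. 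In each case the outer factor $\varphi$ ends up having the form $f(x)=\pm x/2+E_k(0)/2$ of the theorem statement (possibly composed with $\widetilde{E}_s$).

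It remains to match each admissible $F$ against the five Bilu--Tichy standard pairs and read off $G$, hence $g=\varphi\circ G\circ\mu$. When $F$ is linear, the trivial first-kind pair $(x,\,ax^p v(x))$ yields case (i); when $F(y)=y^2$, the first-kind pair with $q=2$ gives cases (iii) and (iv) according to the parity of $p$ and the shape of the accompanying factor, while the second-kind pair $(x^2,(ax^2+b)v(x)^2)$ yields case (v); when $F=\widetilde{E}_s\circ\mathrm{square}$, the trivial first-kind pair produces case (ii). The Dickson pairs (third and fourth kinds) and the exceptional sixth-degree pair $((ax^2-1)^3,\,3x^4-4x^3)$ then have to be ruled out. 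The exceptional pair is eliminated immediately by the hypothesis $k\geq 7$, but excluding the Dickson pairs is the technical heart of the argument: it requires showing that no Dickson polynomial $D_n(x,a)$ of degree $\geq 7$ is affine-equivalent to $E_k$, nor of degree $\geq 4$ to $\widetilde{E}_s\circ\mathrm{square}$. I will handle this by comparing critical values and the ramification pattern of $E_k$ with those of $D_n$, paralleling the analogous exclusions carried out for Bernoulli polynomials in \cite{BBKPT02} and for power sums in \cite{R04}.
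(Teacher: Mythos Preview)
Your overall strategy---reduce to Bilu--Tichy, use Theorem~\ref{EP} to constrain $\deg\varphi$, then run through the standard pairs---matches the paper's. But there is a genuine gap in how you handle the Dickson pairs, and a related confusion in your case enumeration.

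First, your case ``$F=\widetilde{E}_s\circ\mathrm{square}$ with $\varphi$ linear'' is not a separate case: it has the same degree profile as ``$\varphi$ linear and $F$ essentially $E_k$,'' and in the Bilu--Tichy framework $F$ must itself be a standard-pair component, which $\widetilde{E}_s\circ(x-1/2)^2$ is not. So case~(ii) cannot be produced this way. The genuine trichotomy (exactly as in the paper) is $\deg\varphi\in\{k,\,k/2,\,1\}$.

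Second, and more seriously, your assertion that the Dickson pairs ``have to be ruled out'' is correct only when $\deg\varphi=1$, i.e.\ when $F$ is forced to be affine-equivalent to $E_k$ itself; there your ramification/critical-value comparison is exactly the right tool (this is the paper's Lemma~\ref{edicksonepower}, proved via Theorem~\ref{RAK} and Theorem~\ref{bdickson}). But when $\deg\varphi=k/2$ and $F$ has degree~$2$, the Dickson pairs \emph{cannot} be ruled out: $D_2(x,a)=x^2-2a$ is quadratic, so both the third-kind pair $(D_2(x,a^t),D_t(x,a^2))$ with $t$ odd and the fourth-kind pair $(a^{-1}D_2(x,a),-b^{-t/2}D_t(x,b))$ with $t$ even genuinely occur. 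What the paper does instead is use the known extremum structure of Dickson polynomials (Theorem~\ref{bdickson}) to factor $D_t(\mu(x),a^2)+2a^t$ as $\delta(x)p(x)^2$ (odd $t$) or $p(x)^2$ (even $t$), thereby absorbing these pairs into the exceptional forms~(iii) and~(ii) respectively. In particular, case~(ii) of the theorem arises precisely from the fourth-kind Dickson pair, not from any ``trivial first-kind pair'' as you suggest. Your plan to exclude Dickson pairs across the board would therefore fail, and you would be left unable to account for these contributions.
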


The proof of Theorem of Bilu and Tichy relies on Siegel's classical theorem on integral points on curves, which is ineffective. Consequently, the Theorem \ref{THM} is ineffective. 

In the proof of Theorem \ref{THM} in each of the exceptional cases, we find an infinite family of integer solutions of the equation \eqref{EQ}.

In relation to our problem we mention a paper by Dilcher \cite{D86}, where the effective finiteness
theorem is established for the diophantine equation
\begin{equation}\label{dilchereq}
-1^k +3^k - \cdots - (4x-3)^k + (4x-1)^k=y^n,
\end{equation}
which was viewed as a "character-twisted" analogue of Sch\"{a}ffer's
equation \cite{S56}, and a recent paper by Bennett
\cite{B11}, where the same equation
was completely solved for $3\leq k \leq 6$ using
methods from Diophantine approximations, as well as techniques
based upon the modularity of Galois representations.
Using our techniques, one can obtain ineffective finiteness theorems
of a similar flavor as Theorem \ref{THM} for the diophantine equation
\begin{equation}\label{gdilchereq}
-1^k +3^k - \cdots - (4x-3)^k + (4x-1)^k=g(y),
\end{equation}
with $k\in \N$ and an arbitrary $g(x)\in \Q[x]$.

\section{Decomposition of Euler polynomials}\label{Sec2}

In this section we recall and establish some results on polynomial decomposition and then use them to determine decomposition properties of Euler polynomials. 

The following lemma describes the structure of the set of all decompositions of a fixed monic polynomial into two decomposition factors in the case when the corresponding field is either of characteristic $0$ or of positive characteristic, but the degree of the polynomial is not divisible by the characteristic of the field. This case is known in literature as the {\it tame case}. In the tame case, there are known analogues of Ritt's theorems. The case in which the degree of the polynomial is divisible by the characteristic of the field is called {\it wild} and in this case analogues of Ritt's results do not hold, see \cite{DW74}. Similarly, the following lemma also fails in wild case.

\begin{lemma}\label{LM}
Let $f(x)\in K[x]$ be a monic polynomial such that $\textnormal{char}(K)\nmid\deg f$. Let $L$ be an arbitrary extension field of $K$. Then for any nontrivial decomposition $f=f_1\circ f_2$ with $f_1(x), f_2(x)\in L[x]$, there exists a unique decomposition
$f=\tilde f_1\circ \tilde f_2$, such that the following conditions are satisfied:
\begin{itemize}
\item[i)] $\tilde f_1(x)$ and  $\tilde f_2(x)$ are monic polynomials with coefficients in $K$,
\item[ii)] $\tilde f_1\circ  \tilde f_2$ and $f_1\circ f_2$ are equivalent over $L$,
\item[iii)] if we denote $t:=\deg \tilde f_1$, then the coefficient of $x^{t-1}$ in $\tilde f_1(X)$ equals $0$.
\end{itemize}

\end{lemma}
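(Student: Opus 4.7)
The plan is to produce the normalized decomposition in three phases: explicit construction over $L$, uniqueness within the $L$-equivalence class, and descent of coefficients from $L$ to $K$ via a formal $t$-th root. First I would write $f_1(y) = a_t y^t + a_{t-1} y^{t-1} + \cdots$ and $f_2(x) = b_s x^s + \cdots$ with $a_t b_s^t = 1$ (from monicity of $f$), and search for a linear $\ell(y) = \alpha y + \beta \in L[y]$ such that the equivalent pair $(f_1 \circ \ell,\ \ell^{-1} \circ f_2)$ is monic with vanishing coefficient of $y^{t-1}$ in its outer factor. Matching leading coefficients forces $\alpha = b_s$, and then the vanishing-of-$y^{t-1}$ condition forces $\beta = -a_{t-1}/(t a_t)$; these choices are legal because $t \mid \deg f$ together with the hypothesis $\mathrm{char}(K) \nmid \deg f$ implies $t$ is invertible in $K$. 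This produces $\tilde f_1, \tilde f_2 \in L[x]$ satisfying (ii) and (iii).

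Next I would establish uniqueness within the $L$-equivalence class. Assume $(\tilde f_1, \tilde f_2)$ and $(\hat f_1, \hat f_2)$ are two monic decompositions of $f$, each with vanishing $y^{t-1}$ coefficient in the outer factor, related by a linear $\ell(y) = \alpha y + \beta$ so that $\hat f_1 = \tilde f_1 \circ \ell$ and $\tilde f_2 = \ell \circ \hat f_2$. Monicity of $\tilde f_2 = \alpha \hat f_2 + \beta$ forces $\alpha = 1$; then the identity $\hat f_1(y) = \tilde f_1(y + \beta)$, combined with both outer polynomials having no $y^{t-1}$ term, forces $t\beta = 0$, hence $\beta = 0$. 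Thus $\ell = \mathrm{id}$ and the two decompositions coincide.

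The decisive step, and the main obstacle, is to descend the coefficients to $K$. I would consider the formal $t$-th root $\varphi := f^{1/t}$ in the ring $K((x^{-1}))$ of formal Laurent series in $x^{-1}$; this exists by the binomial series because $f$ is monic and $1/t \in K$, and it has the form $\varphi = x^s + O(x^{s-1})$ with coefficients in $K$. From $\tilde f_1(y) = y^t + \sum_{j \leq t-2} a_j y^j$ and $f = \tilde f_1 \circ \tilde f_2$, one obtains $\tilde f_2^t = f - \sum_{j \leq t-2} a_j \tilde f_2^j$, where the correction term has degree at most $s(t-2) = st - 2s$ in $x$. Dividing through by $f$ and extracting $t$-th roots inside $L((x^{-1}))$ yields $\tilde f_2 = \varphi \cdot (1 + O(x^{-2s}))^{1/t} = \varphi + O(x^{-s})$. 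Since $\tilde f_2$ is a polynomial in $x$ of degree $s$, the identity $\tilde f_2 - \varphi = O(x^{-s})$ identifies $\tilde f_2$ with the polynomial part of $\varphi$ (the sum of terms $c_i x^i$ with $i \geq 0$), which lies in $K[x]$. Once $\tilde f_2 \in K[x]$ is established, I would recover $\tilde f_1 \in K[x]$ by iterated Euclidean division of $f \in K[x]$ by the monic polynomial $\tilde f_2$, reading off the coefficients of $\tilde f_1$ as the successive constant remainders.

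A naive Galois-theoretic approach stalls at this descent because, for $\sigma \in \mathrm{Gal}(\overline K/K)$, the conjugate decomposition $(\tilde f_1^\sigma, \tilde f_2^\sigma)$ of $f$ need not be $\overline K$-equivalent to $(\tilde f_1, \tilde f_2)$ — decompositions of a given polynomial with the same degree pattern need not be equivalent — so the uniqueness from the second phase cannot be invoked directly on the Galois conjugate. The formal-root trick circumvents this obstruction by furnishing an intrinsic, $K$-rational description of $\tilde f_2$ directly from $f$ and the outer degree $t$.
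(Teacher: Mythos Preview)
Your argument is correct and follows the same architecture as the paper's: normalize the given decomposition over $L$ by a linear substitution to make both factors monic with vanishing $y^{t-1}$ coefficient in the outer factor, then descend the coefficients to $K$. The only real difference is in the descent step. The paper expands $f=\tilde f_2^{\,t}+a_{t-2}\tilde f_2^{\,t-2}+\cdots+a_0$ and observes that, since the correction has degree at most $n-2s$, the top $s+1$ coefficients of $f$ coincide with those of $\tilde f_2^{\,t}$; this gives an explicit triangular system over $K$ that solves uniquely for $b_{s-1},\ldots,b_0$, after which the $a_j$ are recovered recursively. Your formal $t$-th root in $K((x^{-1}))$ is the same computation repackaged: the binomial-series expansion of $f^{1/t}$ reproduces that triangular recursion coefficient by coefficient, and ``$\tilde f_2$ is the polynomial part of $f^{1/t}$'' is precisely the statement that the system determines $\tilde f_2$ from the top coefficients of $f$. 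So the two proofs are equivalent at the level of calculations, yours being the more conceptual phrasing.

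One correction to your closing aside. In the tame case, any two decompositions of $f$ with the same outer degree \emph{are} equivalent over the ambient field; this is the Ritt--Levi result recorded in the paper as Lemma~\ref{EQUIV}. Hence for $\sigma\in\mathrm{Gal}(\overline K/K)$ the conjugate pair $(\tilde f_1^{\sigma},\tilde f_2^{\sigma})$ \emph{is} equivalent to $(\tilde f_1,\tilde f_2)$, and your Phase~2 uniqueness then forces $\tilde f_i^{\sigma}=\tilde f_i$; the Galois descent does go through. The legitimate reason to prefer the direct route is not that Galois descent fails, but that it would invoke Ritt--Levi, which is a deeper fact than Lemma~\ref{LM} itself.
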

\begin{proof}
Let $f(x)=f_1(f_2(x))$ be a nontrivial decomposition of $f(x)\in K[x]$ with $f_1(x), f_2(x)\in L[x]$. Let $t=\deg f_1$, $k=\deg f_2$ and let $b_k\in L$ be the leading coefficient of $f_2(x)$. Then 
$$\hat f_1(x):=f_1(b_kx), \quad \hat f_2(x):=b_k^{-1}f_2(x)$$
 are clearly monic polynomials. Let $\hat a_{t-1}$ be the coefficient of $x^{t -1}$ in $\hat f_1(x)$. Let 
$$\tilde f_1(x):=\hat f_1(x-t^{-1 }\hat a_{t-1}), \quad \tilde f_2(x):=\hat f_2(x)+t^{-1}\hat a_{t-1}.$$
It is easy to verify that the coefficient of $x^{t-1}$ in $\tilde f_1(x)$ is $0$ and since $\hat f_1$ and $\hat f_2$ are monic, so are $\tilde f_1$ and  $\tilde f_2$.  Let $\tilde f_1(x)=x^{t}+a_{t-1}x^{t-1}+\cdots+a_0$ and $\tilde f_2(x)=x^{k}+b_{k-1}x^{k-1}+\cdots +b_0$, where $a_i, b_j\in L$, for $i=0, 1, \ldots, t$, $j=0, 1, \ldots, k$, and $a_{t-1}=0$. Further let $f(x)=c_nx^n+\cdots+c_1x+c_0$. Now we can easily see that $\tilde f_1$ and $\tilde f_2$ are uniquely determined  and have coefficients in $K$. From
\begin{equation}\label{EXP}
f(x)=\tilde f_1(\tilde f_2(x))=\tilde f_2(x)^t+a_{t-2} \tilde f_2(x)^{t-2}+ \cdots +a_1\tilde f_2(x)+a_0,
\end{equation}
by expanding $\tilde f_2(x)^t$ we get the following system of equations which
completely determine coefficients of $\tilde f_2(x)$:
\begin{equation}\label{SE}
\begin{cases}
 c_{n-1}=t b_{k-1} \\ c_{n-2}=tb_{k-2}+{t \choose 2}b_{k-1}^2 \\ \quad \quad \ \vdots \\ c_{n-k}=tb_{0}+\sum\limits_{i_1+2i_2+\cdots+(k-1)i_{k-1}=k}^{}
 d_{i_1, i_2, \ldots, i_{k-1}}\ b_{k-1}^{i_1}b_{k-2}^{i_2}\ldots b_{1}^{i_{k-1}},
\end{cases}
\end{equation}
where
$$d_{i_1, i_2, \ldots, i_{k-1}}={t \choose {i_1, i_2, \ldots, i_{k-1}}}.$$
Since $c_{i}\in K$, it follows that $b_{i}\in K$ for all $i=0, 1, \ldots,  k-1$ and hence $\tilde f_2(x) \in K[x]$. Furthermore, from \eqref{EXP} it follows that the coefficients of $\tilde f_1$ are uniquely determined by $F$ and $\tilde f_2$. Recursively, $a_i \in K$ for all $i=t-2, \ldots, 1, 0$. Hence, $\tilde f_1(x) \in K[x]$ as well.
\end{proof}

The proof of Lemma \ref{LM} fails when the degree of the polynomial is divisible by the characteristic of the field, since in this case there does not exist the multiplicative inverse of the degree of the polynomial in the field.

Lemma \ref{LM} implies that if $f\in K[x]$ is indecomposable over $K$, then it is indecomposable over any extension field of $K$. This result is well known. In fact, we built up a proof of Lemma \ref{LM} based on \cite[Theorem 6, Chapter 1.3]{S04}.

We will further need the following lemma.

\begin{lemma}\label{EQUIV}
Let $f\in K[x]$ such that $\deg f$ is not divisible by the characteristic of the field $K$. If $f=g_1\circ g_2=h_1\circ h_2$ and $\deg g_1 = \deg h_1$, and hence $\deg g_2=\deg h_2$, then there exists a linear polynomial $\ell \in K[x]$ such that $g_1(x)=h_1(\ell(x))$ and $h_2(x)=\ell(g_2(x))$.
\end{lemma}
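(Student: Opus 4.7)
The plan is to apply Lemma~\ref{LM} to each of the two given decompositions and exploit the fact that the normalized form produced there is uniquely determined by $f$ and the degree profile $(t,k) := (\deg g_1, \deg g_2)$; hence it must be the same for both.

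Writing $c$ for the leading coefficient of $f$ and setting $\tilde f := c^{-1} f \in K[x]$ yields two decompositions of the monic polynomial $\tilde f$ over $K$,
\begin{equation*}
\tilde f = (c^{-1} g_1) \circ g_2 = (c^{-1} h_1) \circ h_2,
\end{equation*}
each with factors in $K[x]$ and common degree profile $(t,k)$; since $\textnormal{char}(K)\nmid\deg f$, the hypotheses of Lemma~\ref{LM} are met. Applying that lemma with $L=K$ to each decomposition produces normalized forms $\tilde f = \tilde f_1 \circ \tilde f_2$ and $\tilde f = \tilde f_1' \circ \tilde f_2'$, in each of which both factors are monic in $K[x]$ and the subleading coefficient of the outer factor vanishes. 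Inspecting the proof of Lemma~\ref{LM}, the coefficients of the inner factor are determined recursively by the system~\eqref{SE}, whose data consists only of the top $k$ coefficients of $\tilde f$ together with $t$, and then the outer factor is pinned down by the expansion~\eqref{EXP}. Consequently both normalized forms agree: $\tilde f_1 = \tilde f_1'$ and $\tilde f_2 = \tilde f_2'$. This is the delicate point of the argument: the uniqueness in Lemma~\ref{LM} must be read as holding across all starting decompositions with a fixed degree profile, not merely within a single equivalence class; the recursive determination via~\eqref{SE} and~\eqref{EXP} establishes exactly this stronger form.

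Finally, part~(ii) of Lemma~\ref{LM} supplies linear polynomials $\ell_1, \ell_2 \in K[x]$ satisfying
\begin{equation*}
\tilde f_1 = (c^{-1} g_1) \circ \ell_1, \quad g_2 = \ell_1 \circ \tilde f_2, \qquad \tilde f_1 = (c^{-1} h_1) \circ \ell_2, \quad h_2 = \ell_2 \circ \tilde f_2.
\end{equation*}
Equating the two expressions for $\tilde f_1$ and cancelling $c^{-1}$ gives $g_1\circ\ell_1 = h_1\circ\ell_2$, so $g_1 = h_1\circ(\ell_2\circ\ell_1^{-1})$; comparing the expressions for $g_2$ and $h_2$ gives $h_2 = (\ell_2\circ\ell_1^{-1})\circ g_2$. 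Setting $\ell := \ell_2\circ\ell_1^{-1} \in K[x]$ delivers the required linear polynomial.
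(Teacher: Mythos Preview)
Your argument is correct. The paper itself does not give a proof of this lemma at all; it simply records that the complex case is in Ritt's 1922 paper and that the general tame case was handled by Levi. You instead supply a self-contained proof using only the paper's own Lemma~\ref{LM}, which is a genuine improvement in exposition: the result becomes an immediate corollary of machinery already developed, rather than an external black box.

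The key observation you make---that the uniqueness clause in Lemma~\ref{LM} is really uniqueness among \emph{all} monic decompositions with vanishing subleading outer coefficient and the given degree profile, not merely uniqueness within a fixed equivalence class---is exactly right, and is visible in the proof of Lemma~\ref{LM}: the system~\eqref{SE} determines $\tilde f_2$ from the top coefficients of $f$ and the integer $t$ alone, and then~\eqref{EXP} determines $\tilde f_1$. Condition~(ii) plays no role in the uniqueness argument. Once the two normalized forms coincide, your composition of the two equivalence maps gives the desired $\ell \in K[x]$.

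One small omission: Lemma~\ref{LM} is stated only for \emph{nontrivial} decompositions, so strictly speaking you should dispose of the case where $\deg g_1 = 1$ or $\deg g_2 = 1$ separately. This is immediate (take $\ell = h_1^{\langle -1\rangle}\circ g_1$, respectively $\ell = h_2\circ g_2^{\langle -1\rangle}$), but worth a sentence for completeness.
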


\begin{proof}
The case $K=\C$ is contained already in \cite{R22}. Lemma was later proved in generality by Levi \cite {L42}. 
\end{proof}

The following observation will be of great help to the proof.

\begin{lemma}\label{DR}
Let $n$ be an even positive integer. If $$(x+1)^{n}-x^{n}=g(x)h(x)$$
with $g(x), h(x)\in\mathbb{R}[x]$, then the coefficients of $g(x)$
and $h(x)$ are either all positive or all negative.
\end{lemma}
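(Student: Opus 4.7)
The plan is to determine the roots of $P(x):=(x+1)^n-x^n$ explicitly, to observe that they all lie on the vertical line $\operatorname{Re}(z)=-\tfrac{1}{2}$, and then to deduce that every irreducible real factor of $P$ has all positive coefficients; the claim will then follow from a simple sign argument.

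First I would note that $P(x)$ has degree $n-1$, and its roots are precisely the numbers $\alpha$ satisfying $(1+1/\alpha)^n=1$, i.e. $1+1/\alpha=\zeta$ for some $n$-th root of unity $\zeta\neq 1$. Thus the roots of $P$ are
\[
\alpha_j=\frac{1}{\zeta_j-1},\qquad \zeta_j=e^{2\pi i j/n},\ j=1,2,\dots,n-1.
\]
A direct rationalization of $1/(\zeta_j-1)$ shows that $\operatorname{Re}(\alpha_j)=-\tfrac{1}{2}$ for every $j$. Consequently the only possible real root is $-\tfrac{1}{2}$, and since $n$ is even one checks that $P(-1/2)=(1/2)^n-(-1/2)^n=0$, so $j=n/2$ produces this real root (and it is simple, as $\zeta_{n/2}=-1$ is a simple $n$-th root of unity).

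Next I would use this to describe the irreducible factorization of $P$ over $\mathbb{R}$. The remaining $n-2$ roots split into $(n-2)/2$ complex conjugate pairs $\{\alpha_j,\overline{\alpha_j}\}$, each giving a real quadratic factor
\[
(x-\alpha_j)(x-\overline{\alpha_j})=x^{2}-2\operatorname{Re}(\alpha_j)x+|\alpha_j|^{2}=x^{2}+x+|\alpha_j|^{2},
\]
which has all positive coefficients. Together with the linear factor $x+\tfrac{1}{2}$, this exhibits $P(x)$ as $n$ times a product of monic real polynomials each having strictly positive coefficients.

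Finally, given any factorization $P=g\cdot h$ over $\mathbb{R}$, the polynomials $g$ and $h$ must be (up to constants) products of some of these irreducible factors; more precisely, $g=a\prod_{i\in S}P_i$ and $h=b\prod_{i\notin S}P_i$ with $ab=n>0$. A product of polynomials with all positive coefficients again has all positive coefficients, so the sign pattern of each of $g,h$ is determined by the sign of $a$ or $b$ respectively. Since $ab>0$, the constants $a$ and $b$ have the same sign, and the conclusion follows. The only place where something subtle happens is verifying that $\operatorname{Re}(\alpha_j)=-\tfrac{1}{2}$ uniformly in $j$; this is the key geometric fact that makes all quadratic factors of the form $x^{2}+x+c$ with $c>0$, and everything else is a straightforward sign bookkeeping.
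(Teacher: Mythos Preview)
Your proof is correct and follows essentially the same approach as the paper: factor $(x+1)^n-x^n$ into its irreducible real factors, verify that each of them has all positive coefficients, and then invoke unique factorization in $\mathbb{R}[x]$ to conclude. The only cosmetic difference is that the paper pairs the linear factors $(x+1-\omega_j x)$ directly to obtain quadratics of the shape $c_j x^{2}+c_j x+1$ with $c_j=2-(\omega_j+\overline{\omega_j})>0$, whereas you first solve for the roots $\alpha_j=1/(\zeta_j-1)$, observe the pleasant geometric fact that they all satisfy $\operatorname{Re}(\alpha_j)=-\tfrac12$, and hence get monic quadratics $x^{2}+x+|\alpha_j|^{2}$; these are of course the same factors up to scaling.
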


\begin{proof}
We have $(x+1)^{n}-x^{n}=\prod_{i=1}^{n}(x+1-\omega_ix)$,
where $\omega_i=e^{\frac{2\pi i}{n}}$, $i=1, 2, \dots, n$. Let $n=2k$. Hence, $\omega_{2k}=1$,
$\omega_{k}=-1$, and $\omega_{2k-j}=\overline{\omega_j}$
for all $j=1,2,\ldots ,k-1$. Therefore we have
\begin{align*}\label{xx+1}
(x+1)^{n}-x^{n}&=(2x+1)\prod_{j=1}^{k-1}\left(x+1-\omega_jx\right)\left(x+1-\overline{\omega_j}x\right)\\
\nonumber &=(2x+1)\prod_{j=1}^{k-1}\left((2-(\omega_j+\overline{\omega_j}))x^{2}+(2-(\omega_j+\overline{\omega_j}))x+1\right).
\end{align*}
Clearly $2-(\omega_j+\overline{\omega_j})>0$ for all $j\in\left\{1,2,\ldots ,k-1\right\}$.
Now the assertion follows from the fact that the ring $\mathbb{R}[x]$ is a unique factorization domain.
\end{proof}

We will make an extensive use of the following theorem of Rakaczki \cite{R11}.
\begin{theorem}\label{RAK}
Let $m\geq 7$ be an integer. Then the polynomial
$E_m(x)+b$ has at least three simple zeros for arbitrary
complex number $b$.
\end{theorem}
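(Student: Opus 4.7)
The plan is to reduce the multiplicity structure of the zeros of $E_m(x)+b$ to the common-zero structure of Euler polynomials of consecutive degrees. From the basic identity $E_m'(x)=m\,E_{m-1}(x)$ one gets, by iterated differentiation, that any zero $\alpha$ of $E_m(x)+b$ of multiplicity at least $r$ must satisfy
\[
E_{m-1}(\alpha)=E_{m-2}(\alpha)=\cdots=E_{m-r+1}(\alpha)=0\quad\text{and}\quad E_m(\alpha)=-b.
\]
Thus, controlling multiplicities of $E_m(x)+b$ translates into controlling how many common zeros the polynomials $E_{m-1},E_{m-2},\dots$ share.

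The first step is to show that $\gcd\bigl(E_{m-1}(x),E_{m-2}(x)\bigr)=1$ in $\C[x]$, which rules out zeros of multiplicity $\geq 3$ of $E_m(x)+b$ regardless of $b$. For this I would exploit the recurrence $E_n(x+1)+E_n(x)=2x^n$, the symmetry $E_n(1-x)=(-1)^n E_n(x)$, and the explicit values $E_n(0)=-E_n(1)$ and $E_n(1/2)=E_n/2^n$ (where $E_n$ is the $n$-th Euler number). These identities force any common zero of $E_{m-1}$ and $E_{m-2}$ into a very restricted set that can then be eliminated.

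Granting this, every zero of $E_m(x)+b$ is simple or of multiplicity exactly $2$. Arguing by contradiction, suppose that $E_m(x)+b$ has at most two simple zeros. Counting with multiplicities forces at least $\lceil (m-2)/2\rceil$ distinct double zeros, each a zero of $E_{m-1}(x)$ at which $E_m$ takes the common value $-b$. For $m\geq 7$, this says that the equation $E_m(x)=-b$ has at least three solutions among the zeros of $E_{m-1}(x)$.

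The main obstacle, and the technical heart of the argument, is to rule out this last scenario. My plan is to invoke the involution $x\mapsto 1-x$, which by the symmetry relation above pairs the zeros of $E_{m-1}$ and either preserves or negates $E_m$ according to the parity of $m$, together with interlacing and sign information for the real zeros of $E_{m-1}$ on $[0,1]$ derived from the recurrence. A case distinction on the parity of $m$ and careful bookkeeping of the level sets of $E_m$ on the zero set of $E_{m-1}$ should then yield the contradiction. The threshold $m\ge 7$ arises precisely from the need to exclude small-degree coincidences that genuinely occur for $m\le 6$, and these low-degree cases would be verified separately by direct computation.
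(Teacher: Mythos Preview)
First, note that the paper does not prove this theorem: it is quoted, with attribution, from Rakaczki's earlier work \cite{R11}. There is thus no proof in the present paper to compare your proposal against, and the proposal must stand on its own.

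Your reductions in steps 1--3 are sound. In fact the coprimality $\gcd(E_{m-1},E_{m-2})=1$ is immediate from Proposition~\ref{PROP}\,(iv): it is equivalent to $E_{m-1}$ being squarefree, and only $E_5$ has a multiple root, so the coprimality holds for all $m\ge 7$. This correctly rules out zeros of $E_m(x)+b$ of multiplicity at least $3$, and the counting in step~3 is fine.

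The genuine gap is step 4. What remains is to show that $E_m$ cannot take a common value $-b$ at three or more zeros of $E_{m-1}$; equivalently, that no critical value of $E_m$ is attained at three or more critical points. The involution $x\mapsto 1-x$ pairs the critical points and, for odd $m$ with $b\neq 0$, removes at most half of them from consideration---far short of the bound you need. Your appeal to ``interlacing and sign information for the real zeros of $E_{m-1}$ on $[0,1]$'' is also insufficient: even for polynomials with only real critical points, distinct critical points can share a critical value (take $x^4-2x^2$, with critical values $0,-1,-1$), so interlacing by itself yields nothing here; and in any case the sketch says nothing about possible non-real critical points. This is exactly the hard part of the theorem, and your outline does not supply the idea that carries it. The proof in \cite{R11} requires substantially more specific arithmetic input about Euler polynomials than the general symmetry and recurrence relations you invoke.
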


Finally, to the proof of Theorem \ref{EP} we need the following proposition, in which we collect some well known properties of Euler polynomials,  which will be used in the sequel, sometimes without particular reference, see \cite{B69} for proofs.

\begin{proposition}\label{PROP}
\mbox{}
\begin{itemize}
\item[i)]  $E_n(x)=(-1)^{n}E_n(1-x)$;
\item[ii)]  $E_n(x+1)+E_n(x)=2x^{n}$;
\item[iii)]  $E'_n(x)=nE_{n-1}(x)$;
\item[iv)] $E_5(x)$ is the only Euler polynomial with a multiple root.
\item[v)] If $E_k$ denotes the $k$-th Euler number, which is defined by $E_k=2^k E_k(1/2)$, then
$$E_n(x)=\sum_{k=0}^{n}{n\choose k}\frac{E_k}{2^{k}}\left(x-\frac{1}{2}\right)^{n-k},$$ 
i.e.\@ $E_n(x)=\sum_{k=0}^{n}c_kx^k$ with
$$c_k=\sum_{j=0}^{n-k}{n \choose j}\frac{E_j}{2^j}{n-j \choose k}\left(\frac{-1}{2}\right)^{n-k-j},$$
for $k=0,1, \ldots, n$. In particular,
$$c_n=1, \ c_{n-1}=-\frac{1}{2}n, \ c_{n-2}=0, \ c_{n-3}=\frac{1}{4}{n \choose 3}, \ \textnormal{etc.} $$
\end{itemize}
\end{proposition}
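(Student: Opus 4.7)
The plan is to first verify the explicit decomposition and then analyze an arbitrary nontrivial decomposition of $E_k$ by combining the $1/2$-symmetry of Euler polynomials with Lemmas \ref{LM}, \ref{EQUIV}, and Theorem \ref{RAK}. The identity $E_{2m}(x) = \widetilde{E}_m((x-1/2)^2)$ follows quickly: Proposition \ref{PROP}(i) gives $E_{2m}(1-x) = E_{2m}(x)$, so $E_{2m}$ is a polynomial in $(x-1/2)^2$; combining Proposition \ref{PROP}(v) with the vanishing of odd-indexed Euler numbers $E_j$ (a consequence of (i) evaluated at $x = 1/2$) yields the explicit formula.

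Given a nontrivial decomposition $E_k = f_1 \circ f_2$, I would apply Lemma \ref{LM} to obtain an equivalent $E_k = \tilde f_1 \circ \tilde f_2$ with both factors monic and with the coefficient of $x^{t-1}$ in $\tilde f_1$ equal to $0$, where $t = \deg \tilde f_1$, $s = \deg \tilde f_2$, $ts = k$. From $E_k(1-x) = (-1)^k E_k(x)$, the second decomposition $((-1)^k \tilde f_1) \circ (\tilde f_2 \circ (1-x))$ shares the outer degree with the first, and Lemma \ref{EQUIV} produces a linear $\ell(x) = \lambda x + \mu$ satisfying $\lambda = (-1)^s$, $\lambda^t = (-1)^k$, $\tilde f_2(1-x) = \ell(\tilde f_2(x))$, and $\tilde f_1(x) = (-1)^k \tilde f_1(\ell(x))$. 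Matching the $x^{t-1}$-coefficient of $\tilde f_1$ (normalized to zero) forces $\mu = 0$. The possibilities split by the parity of $s$: if $s$ is odd then $\tilde f_2(x) = F(x-1/2)$ with $F$ odd of degree $s$, and $\tilde f_1$ is odd (when $k$ is odd) or even, i.e.\ $\tilde f_1(y) = H(y^2)$ (when $k$ is even); if $s$ is even then $\tilde f_2(x) = G((x-1/2)^2)$ for some monic $G$ of degree $s/2$, and automatically $\tilde f_1 \circ G = \widetilde{E}_m$.

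The three resulting cases are dispatched using Theorem \ref{RAK}. For odd $k$: writing $F(y) = yF_1(y^2)$ and $\tilde f_1(z) = zL(z^2)$, we obtain the polynomial identity $E_k(y+1/2)/y = F_1(v)\,L(vF_1(v)^2)$ with $v = y^2$. Fixing $b$ as a critical value of $\tilde f_1$, the $s$ zeros of $\tilde f_2(x) = c$ (for the critical point $c$) are multiple zeros of $E_k(x) - b$, so the number of simple zeros of $E_k(x) - b$ is at most $(t-2)s = k - 2s$, contradicting Theorem \ref{RAK}'s lower bound of $\geq 3$ for $k \geq 7$ outside borderline configurations; the borderline cases and the small $k \in \{3,5\}$ are handled by prime degree or direct coefficient matching via Proposition \ref{PROP}(v). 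For even $k$ with $s$ odd, $E_{2m}(x) = H((F(x-1/2))^2)$; if $\deg H = 1$, then $E_{2m} - h_0$ is a perfect square with no simple zeros, contradicting Theorem \ref{RAK} for $k \geq 7$ (with direct verification for $k = 6$), and if $\deg H \geq 2$ we obtain a nontrivial decomposition $\widetilde{E}_m(v) = H(vF_1(v)^2)$, ruled out by the indecomposability of $\widetilde{E}_m$. For even $k$ with $s$ even, the equation $\widetilde{E}_m = \tilde f_1 \circ G$ together with the indecomposability of $\widetilde{E}_m$ forces $G$ to be linear, yielding $s = 2$ and the stated decomposition.

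To finally prove $\widetilde{E}_m$ is indecomposable, I would suppose $\widetilde{E}_m = H \circ G$ with $\deg H, \deg G \geq 2$. Then $E_{2m}(x) = H(G((x-1/2)^2))$, and Theorem \ref{RAK} applied to $E_{2m}$ (for $m \geq 4$), translated via $v = (x-1/2)^2$, says that $\widetilde{E}_m(v) - b$ has at least two distinct nonzero simple roots for every $b$. For $b$ a critical value of $H$ with corresponding critical point $c$, all $\deg G$ roots of $G(v) = c$ are multiple roots of $\widetilde{E}_m(v) - b$, so the simple roots of $\widetilde{E}_m(v) - b$ number at most $\deg G \cdot (\deg H - 2)$. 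This immediately contradicts the lower bound when $\deg H = 2$, and $\deg H \geq 3$ is finished by exploiting the derivative identity $\widetilde{E}_m'(v) = m\widetilde{P}_{m-1}(v)$ (where $\widetilde{P}_{m-1}$ is characterised by $E_{2m-1}(y+1/2) = y\widetilde{P}_{m-1}(y^2)$), which forces $G'(v) \mid \widetilde{P}_{m-1}(v)$, together with direct coefficient comparison for small $m$. This final step---ruling out hidden decompositions of $\widetilde{E}_m$ without the benefit of a direct symmetry---is the main technical obstacle of the argument.
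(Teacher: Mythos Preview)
Your proposal does not address the stated Proposition~\ref{PROP} at all. Proposition~\ref{PROP} is a collection of standard identities for Euler polynomials (the reflection formula, the difference relation, the derivative formula, the simple-root statement, and the expansion around $1/2$), and the paper does not prove it: it simply refers the reader to \cite{B69}. There is nothing here to compare against a nontrivial argument.

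What you have written is instead an outline of a proof of Theorem~\ref{EP}, the decomposition theorem for Euler polynomials. You even cite Proposition~\ref{PROP}(i) and (v) as inputs, which makes it clear that you are treating the proposition as known and proving something else. If the intent was really to prove Proposition~\ref{PROP}, then the entire proposal is off-target: none of the five items (i)--(v) is even mentioned as a goal, let alone established. If the intent was to prove Theorem~\ref{EP}, then you should resubmit under that heading; note in passing that the paper's own proof of Theorem~\ref{EP} proceeds differently in the even case (it uses Lemma~\ref{DR} on the sign pattern of factors of $(x+1)^n-x^n$ to force $k=2$, rather than a root-counting argument via Theorem~\ref{RAK}), and your sketch of the indecomposability of $\widetilde{E}_m$ is considerably more involved than the paper's, which obtains it as an immediate corollary of the $k=2$ conclusion together with Lemma~\ref{EQUIV}.
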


\begin{proof}[Proof of Theorem \ref{EP}]
Let $n\in \N$ and
\begin{equation}\label{decomposition}
E_n(x)=g(h(x))
\end{equation}
be a nontrivial decomposition of the $n$-th Euler polynomial.
By Lemma \ref{LM} we may assume that polynomials
$g(x)$ and $h(x)$ are monic with rational coefficients; let $g(x)=x^{t}+a_{t-1}x^{t-1}+\cdots
+a_0\in \Q[x]$ and $h(x)=x^{k}+b_{k-1}x^{k-1}+\cdots +b_0\in\Q[x]$. By the same lemma we may assume $a_{t-1}=0$. Note  $t, k\geq 2$ by assumption. Using \eqref{SE} we can express the coefficients of $h(x)$ in terms of coefficients of the $E_n(x)$, which are given in Proposition \ref{PROP}, so
\begin{align}\label{B'S}
b_{k-1}&=-\frac{k}{2}, \quad  b_{k-2}=-\frac{(t-1)k^{2}}{8}, \\  
\nonumber{}b_{k-3}&=\frac{1}{4}{k\choose 3}+\frac{(t-1)k^{2}(k-2)}{16}.
\end{align}
From $E_n(1-x)=(-1)^nE_n(x)$, it follows that 
\begin{equation}\label{oddeven}
g(h(1-x))=(-1)^n g(h(x)).
\end{equation}
We first consider the case when $n$ is even. Then $g(h(1-x))=g(h(x))$. From Lemma \ref{EQUIV}, by using $a_{t-1}=0$, we get that 
either $h(1-x)=h(x)$ or $h(1-x)=-h(x)$ and $g(x)=g(-x)$.
In the former case $k$ is even. From Proposition \ref{PROP} we get
$$2\left((x+1)^n - x^n\right)=E_n(-x-1)-E_n(x)=g(h(-x-1))-g(h(x)),$$
so $(x+1)^{n}-x^{n}$  is divisible by $h(-x-1)-h(x)$ in $\Q[x]$. Note that the leading coefficient of
$h(-x-1)-h(x)$ is $k-2b_{k-1}=2k$. If $k\geq 4$, from Lemma \ref{DR} it follows that the coefficient of $x^{k-4}$ in $h(-x-1)-h(x)$ is positive, so
\begin{equation}\label{coeff}
{k\choose 4}-{k-1\choose 3}b_{k-1}+{k-2\choose
2}b_{k-2}-{k-3\choose 1}b_{k-3}>0.
\end{equation}
Using \eqref{B'S} we obtain
$${k\choose 4}> \frac{(t-1)k^{2}(k-2)(k-3)}{16},$$
wherefrom $t\leq 1$, contradicting the assumption. Since $k$ is even, we conclude $k=2$ and hence $n=2t$. Lemma \ref{EQUIV} implies that this decomposition is equivalent to the decomposition \eqref{Euler}. 

In the case when $h(1-x)=-h(x)$ and $g(x)=g(-x)$ one can deduce that $k$ is odd, $t$ is even, $g(x)=x^{t}+a_{t-2}x^{t-2}+\cdots +a_2x^{2}+a_0$ and 
$$E_n(x)=g(h(x))=g_1(h_1(x)),$$
where 
$$g_1(x)=x^{t/2}+a_{t-2}x^{t/2-1}+\cdots +a_2x+a_0, \quad h_1(x)=h(x)^{2}.$$
But then $h_1(x)=h_1(1-x)$ and we can use the argument above to get a contradiction provided $t\geq 4$. If $t=2$, then $g(x)=x^{2}+a_0$ and hence
$E_n(x)=h(x)^{2}+a_0.$ From Theorem \ref{RAK} it follows that this is possible only when $n\leq6$. Since $n\geq 4$ and $k$ is odd, it follows that the only possibility is $n=6$, but a direct calculation shows that $E_6(x)$ is not of this form.

If $n$ is odd, then $k$ and $t$ are also odd. Proposition \ref{PROP}  implies
$$2x^{n}=E_n(x)-E_n(-x)=g(h(x))-g(h(-x)),$$
wherefrom we deduce that $h(x)-h(-x)$ divides $2x^{n}$ in $\Q[x]$. Hence, $h(x)-h(-x)=qx^l$ with $q\in \Q$ and $l\leq n$. By expanding $h(x)-h(-x)$ we obtain $l=k$, $q=2$ and $b_{k-2}=0$, which together with \eqref{B'S}  implies $t=1$ or $k=0$, contradicting the assumption $k,t \geq 2$. Hence, Euler polynomials with odd index are indecomposable.
\end{proof}

\section{Application of the Theorem of Bilu and Tichy}\label{Sec3}

To the proof of Theorem \ref{THM} we need some auxiliary results.
First we recall the finiteness criterion of Bilu and Tichy \cite{BT00}. 

We say that the equation $f(x)=g(y)$ has infinitely many {\it rational solutions with a
bounded denominator} if there exists a positive integer $\lambda$ such that
$f(x)=g(y)$ has infinitely many rational solutions $x, y$
satisfying $\lambda x, \lambda y \in \Z$. If the equation $f(x)=g(y)$ has only finitely many rational solutions with a bounded denominator, then it clearly has only finitely many integer solutions.

We further need to define five kinds of so-called {\it standard pairs} of polynomials.

In what follows $a$ and $b$ are nonzero rational numbers, $m$ and $n$ are positive integers, $r \geq 0$ is an integer and $p(x) \in \Q [x]$ is a nonzero polynomial (which may be constant). 

A \textit{standard pair over $\Q$ of the first kind}  is $\left(x^m, a x^{r} p(x)^m\right)$, or switched, i.e\@ $\left(a x^{r} p(x)^m, x^m\right)$, where $0 \leq r < m$, $\gcd(r,m)=1$ and $r + \deg p > 0$.

A \textit{standard pair over $\Q$ of the second kind} is $\left(x^2, \left(a x^2 + b\right) p(x)^2\right)$, or switched.

Denote by $D_{m} (x,a)$ the $m$-th Dickson polynomial with parameter $a$, defined by the functional equation 
$$D_{m}\left (z+\frac{a}{z}, a\right) = z^{m} + \left(\frac{a}{z}\right)^{m}$$
or by the explicit formula
\begin{equation}\label{dickson}
D_{m} (x,a) = \sum_{i=0}^{\left\lfloor m / 2 \right\rfloor}{\frac{m}{m - i} \binom{m - i}{i} (-a)^i x^{m - 2i}}.
\end{equation}

A \textit{standard pair over $\Q$ of the third kind} is $\left(D_{m} \left(x,a^{n}\right), D_{n} \left(x,a^{m}\right)\right)$, where $\gcd (m, n) = 1$.

A \textit{standard pair over $\Q$ of the fourth kind} is
$$\left(a^{-m/2} D_{m} (x,a), -b^{-n/2} D_{n} (x,b)\right),$$
where $\gcd (m,n) = 2$.

A \textit{standard pair over $\Q$ of the fifth kind} is $\left(\left(a x^2 - 1\right)^3, 3x^4 - 4x^3\right)$, or switched.

\begin{theorem}\label{BT}
Let $f(x)$ and $g(x)$ be non-constant polynomials in $\Q[x]$.
Then the following assertions are equivalent.
\begin{itemize}
\item[-] The equation $f(x)=g(y)$ has infinitely many
rational solutions with a bounded denominator;
\item[-] We have 
$$f(x)=\varphi\left(f_{1}\left(\lambda(x)\right)\right), \quad g(x)=\varphi\left(g_{1}\left(\mu(x)\right)\right),$$ 
where $\lambda(x)$ and $\mu(x)$ are linear polynomials in $\Q[x]$,
$\varphi(x)\in\Q[x]$, and $\left(f_{1}(x),g_{1}(x)\right)$ is a
standard pair over $\Q$ such that the equation $f_1(x)=g_1(y)$
has infinitely many rational solutions with a bounded denominator.
\end{itemize}
\end{theorem}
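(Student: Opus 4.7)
The plan is to prove the Bilu–Tichy theorem in two directions. The easy direction ($\Leftarrow$) is a case-by-case verification: for each of the five families of standard pairs, I would exhibit an explicit infinite family of rational solutions with bounded denominator. For standard pairs of the first kind the parametrization is immediate; for the second kind one can set $y$ to make $ax^2+b$ a square infinitely often; for Dickson pairs the key identity $D_m(z+a/z,a)=z^m+(a/z)^m$ yields solutions by varying $z$ through roots of unity of appropriate orders (so that exponents align via the $\gcd$ conditions); and for the sporadic fifth pair a short direct check suffices. Closure under composition with $\varphi$ and under the linear substitutions $\lambda,\mu$ is trivial.

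For the hard direction ($\Rightarrow$), the strategy is to combine Siegel's theorem on integral points with polynomial decomposition theory. Consider the affine curve $C\colon f(x)-g(y)=0$ together with its decomposition into $\overline{\mathbb Q}$-irreducible components. Infinitely many rational solutions with bounded denominator on $C$ force, via Siegel's theorem, the existence of an irreducible component of geometric genus $0$ having at most two points at infinity. The task therefore reduces to a purely algebraic-geometric classification problem: describe all pairs $(f,g)\in\mathbb Q[x]^2$ such that $f(x)-g(y)$ admits an absolutely irreducible factor of genus $0$ with at most two points at infinity. I would approach this by studying the Puiseux expansions of $f$ and $g$ at $\infty$ and the monodromy of the two covers $f,g\colon \mathbb P^1\to \mathbb P^1$; the genus $0$ condition translates via Riemann–Hurwitz into a constraint on the ramification profile of the fiber product, and the "at most two branches at infinity" condition becomes a combinatorial/group-theoretic statement about how the branches of $f$ and $g$ pair up.

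With the classification in hand, one removes a common left composition factor: writing $f=\varphi\circ f_1\circ \lambda$ and $g=\varphi\circ g_1\circ \mu$ with $\lambda,\mu$ linear and $\varphi$ chosen maximally, and invoking Ritt's decomposition theorems together with Fried's theory of "Siegel polynomials," one concludes that the remaining pair $(f_1,g_1)$ must belong to one of the five standard-pair families. The transfer of infiniteness between $(f,g)$ and $(f_1,g_1)$ is a formal consequence of the fact that $\varphi$, $\lambda$, $\mu$ are polynomials over $\mathbb Q$ and $\lambda,\mu$ are invertible. The main obstacle—indeed, the technical heart of the Bilu–Tichy paper—is the classification of polynomial pairs yielding a genus $0$ component with at most two points at infinity; this requires substantial input from the arithmetic of function fields, the theory of primitive permutation groups acting on branch data, and Fried's earlier work generalizing Ritt's second theorem. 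Once that classification is secured, the rest of the proof is essentially bookkeeping in polynomial decomposition.
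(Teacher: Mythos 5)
The paper does not actually prove Theorem \ref{BT}: it is quoted verbatim as the finiteness criterion of Bilu and Tichy and used as a black box, with the proof residing entirely in the cited reference. So there is no internal argument to compare yours against; the relevant question is whether your blind attempt would stand on its own, and it would not. Your outline correctly reproduces the architecture of the genuine Bilu--Tichy proof (Siegel's theorem forcing a genus-$0$ component of $f(x)-g(y)$ with at most two points at infinity, then a classification of such pairs via Ritt's second theorem and Fried's work, then extraction of a common left factor $\varphi$), but the entire technical core --- the classification of all pairs $(f,g)$ for which $f(x)-g(y)$ has an absolutely irreducible factor of genus $0$ with at most two points at infinity, and the proof that after stripping $\varphi$, $\lambda$, $\mu$ one lands exactly in the five standard families --- is named rather than carried out. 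That classification occupies most of the Bilu--Tichy paper and depends on prior hard results (Bilu's analysis of low-genus factors of $f(x)-g(y)$, Fried's generalization of Ritt's second theorem); asserting that ``once that classification is secured, the rest is bookkeeping'' leaves the proof with its main content missing.

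There is also a concrete error in your easy direction. For the Dickson pairs, letting $z$ run through roots of unity does not produce infinitely many rational solutions with bounded denominator: it yields only finitely many values of $z+a/z$, and generally complex ones. The correct construction is Pell-type: choose $z$ in a real quadratic field with $z\bar z=a$, so that $x=z^{n}+a^{n}z^{-n}=z^{n}+\bar z^{\,n}$ and $y=z^{m}+a^{m}z^{-m}$ are traces of algebraic integers (up to a bounded denominator) and the identity $D_{m}(z^{n}+a^{n}z^{-n},a^{n})=z^{mn}+a^{mn}z^{-mn}=D_{n}(z^{m}+a^{m}z^{-m},a^{m})$ gives infinitely many solutions as $z$ ranges over powers of a fundamental unit-like element. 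If you intend to supply a proof of Theorem \ref{BT} rather than cite it, both the classification step and this construction need to be done in full; otherwise the honest course, which is what the paper takes, is to state the theorem with a reference.
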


The following theorem for hyperelliptic equations is due to Baker \cite{BA69}.

\begin{theorem}\label{Baker}
Let $f(x)\in\Q[x]$ be a polynomial with at least three
simple roots. Then all the integer solutions of the equation $f(x)=y^{2}$ satisfy $\max\left\{|x|,|y|\right\}\leq C$,
where $C$ is an effectively computable constant that depends only on
the coefficients of $f$.
\end{theorem}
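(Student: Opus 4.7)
The plan is to follow Baker's 1969 strategy: reduce the hyperelliptic equation $y^{2}=f(x)$ to an $S$-unit equation over the splitting field of $f$, and then invoke effective lower bounds for linear forms in logarithms. Each of the reductions preserves effectivity, so the final bound $C$ depends computably on the coefficients of $f$ alone.

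First I would pass to the splitting field $K$ of $f$, with ring of integers $\mathcal{O}_K$, and fix a finite set $S$ of places of $K$ containing the archimedean ones together with the prime ideals dividing the leading coefficient of $f$, its discriminant, or any difference $\alpha_i-\alpha_j$ of its roots. Write $f(x)=a\prod_i(x-\alpha_i)^{e_i}$ and use the hypothesis to select three simple roots $\alpha_1,\alpha_2,\alpha_3$. For any integer solution $(x,y)$, comparing the prime ideal factorizations on the two sides of $y^{2}=f(x)$ shows that, for each of these simple $\alpha_i$, one can write
\[
x-\alpha_i=\beta_i\,\xi_i^{2},
\]
where $\beta_i$ ranges over a finite set determined only by $f$ (essentially representatives of the $S$-ideal classes times $S$-units modulo squares) and $\xi_i\in\mathcal{O}_{K,S}$.

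Eliminating $x$ pairwise among the three identities gives $\beta_i\xi_i^{2}-\beta_j\xi_j^{2}=\alpha_j-\alpha_i$, and normalizing Siegel's three-term identity yields an $S$-unit equation
\[
u+v=1,\qquad u,v\in\Gamma,
\]
where $\Gamma\subset K^{\times}$ is the finitely generated multiplicative group generated by the $\beta_i$, the differences $\alpha_j-\alpha_i$, and the $S$-units of $K$. Baker's effective theorem on linear forms in archimedean and $p$-adic logarithms then produces an explicit upper bound for the heights of all such $(u,v)$, which translates back through the factorizations into an effective bound on $|\xi_i|$, hence on $|x|$ and $|y|$.

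The hard part is the middle step: extracting a bona fide $S$-unit equation from the raw factorization $x-\alpha_i=\beta_i\xi_i^{2}$. This requires simultaneously controlling the finite pool of admissible $\beta_i$ via representatives of the $S$-class group and of $\mathcal{O}_{K,S}^{\times}/(\mathcal{O}_{K,S}^{\times})^{2}$, and combining the three relations so that $x$ and the squares $\xi_i^{2}$ all disappear from the resulting linear form. It is precisely here that the hypothesis of three simple roots is indispensable: Siegel's identity needs three independent linear factors of $f$, and with only two simple roots the method collapses to a Pell-type equation, which can carry infinitely many integer solutions.
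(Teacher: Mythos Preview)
The paper does not prove this theorem at all: it simply quotes it as a classical result of Baker \cite{BA69} and uses it as a black box later in the proof of Theorem~\ref{THM}. So there is no ``paper's own proof'' to compare against.

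Your sketch is a faithful outline of the standard argument (essentially Baker's, in the streamlined form one finds in later expositions): pass to the splitting field, enlarge $S$ to kill the bad primes, use the simplicity of three roots to write $x-\alpha_i=\beta_i\xi_i^{2}$ with the $\beta_i$ drawn from a finite effectively computable set, eliminate $x$ via Siegel's identity to land on an $S$-unit equation, and finish with effective bounds from linear forms in logarithms. The emphasis you place on why three simple roots are needed is correct and well put. As a proof \emph{sketch} it is fine; to turn it into a full proof one would of course have to make precise the control of the $\beta_i$ (via the $S$-class group and $\mathcal{O}_{K,S}^{\times}/(\mathcal{O}_{K,S}^{\times})^{2}$) and track the effectivity through each step, but that is routine bookkeeping rather than a gap.

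In short: your proposal supplies strictly more than the paper does for this statement, and what it supplies is correct in outline.
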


For $P(x)\in\mathbb{C}[x]$, a complex number $c$ is said to be an
\textit{extremum} if $P(x)-c$ has multiple roots. If $P(x)-c$ has $s$ multiple roots,
the \textit{type} of $c$ is the tuple $(\alpha_{1},\alpha_{2},\ldots,\alpha_{s})$ 
of multiplicities of its roots in an increasing order.
Clearly $s < \deg P$, $(\alpha_{1},\alpha_{2},\ldots,\alpha_{s})\neq (1, 1, \ldots, 1)$ 
and $\alpha_1+\alpha_2 + \cdots +\alpha_s=\deg P$.

In what follows $D_k(x, a)$ denotes the Dickson polynomial of degree $k\in \N$ with parameter $a\in \Q\setminus\{0\}$. The following result on Dickson polynomials can be found in \cite[Proposition 3.3]{B99}.

\begin{theorem}\label{bdickson}
If $k\geq 3$, $D_{k}(x,a)$ has exactly two extrema and those are
$\pm 2a^{\frac{k}{2}}$. If $k$ is odd, then both are of type
(1,2,2,\ldots ,2). If $k$ is even, then $2a^{\frac{k}{2}}$ is of
type (1,1,2,\ldots ,2) and $-2a^{\frac{k}{2}}$ is of type
(2,2,\ldots ,2).
\end{theorem}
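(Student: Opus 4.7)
The approach I would take is to work entirely through the substitution $x = z + a/z$, under which $D_k(x, a) = z^k + a^k/z^k$, thereby reducing multiplicity questions about $D_k(x,a) - c$ to counting coincidences under the involution $z \mapsto a/z$. First I would parametrize a given value $c \in \mathbb{C}$ by writing $c = w + a^k/w$, where $w$ is any nonzero root of $w^2 - cw + a^k = 0$. In the variable $z$ this gives
\[
D_k(x, a) - c \;=\; \frac{(z^k - w)(w z^k - a^k)}{w z^k},
\]
so the $2k$ roots of the numerator split into $S_1 = \{z : z^k = w\}$ and $S_2 = \{z : z^k = a^k/w\}$, each of size $k$, and the involution $z \mapsto a/z$ swaps $S_1$ with $S_2$. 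Since $z$ and $a/z$ produce the same $x$, the $2k$ values of $z$ fold onto $k$ values of $x$, matching $\deg D_k = k$.

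Next I would identify the extrema. A genuine collision in $x$ between two distinct $z$'s requires a pair $(z, a/z)$ to lie entirely inside $S_1$ (equivalently, entirely inside $S_2$), which forces $a^k/w = w$, i.e.\ $w^2 = a^k$ and hence $c = 2w = \pm 2 a^{k/2}$. This simultaneously shows that $\pm 2 a^{k/2}$ are the only extrema.

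To compute the types at each extremum I would parametrize explicitly. At $c = 2a^{k/2}$, take $w = a^{k/2}$ (so $S_1 = S_2$) and write $z_j = a^{1/2}\zeta^j$ for $\zeta$ a primitive $k$-th root of unity and $j = 0,\dots,k-1$, giving $x_j = 2 a^{1/2}\cos(2\pi j/k)$. The pairing $j \leftrightarrow -j \bmod k$ implements $z \mapsto a/z$: fixed indices produce simple roots, non-fixed orbits produce double roots. For $k$ odd only $j=0$ is fixed, yielding type $(1,2,\dots,2)$; for $k$ even both $j=0$ and $j=k/2$ are fixed, yielding $(1,1,2,\dots,2)$. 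At $c = -2a^{k/2}$, take $w = -a^{k/2}$ and write $z_j = a^{1/2} e^{i\pi(2j+1)/k}$. The relevant involution becomes $j \mapsto k-1-j \bmod k$, whose fixed points solve $2j \equiv -1 \pmod k$. For $k$ odd this has a unique solution, giving type $(1,2,\dots,2)$; for $k$ even it has none, so every orbit has size $2$ and the type is $(2,\dots,2)$.

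The main obstacle is not any deep idea but careful bookkeeping in the $2k$-to-$k$ folding: one must verify that a double root of $D_k(x,a)-c$ in $x$ corresponds \emph{exactly} to a nontrivial orbit of $z \mapsto a/z$ on $S_1 \cup S_2$, and not to an artifact of the rational substitution (for instance, one should check separately that the excluded branch points $z=0, \pm\sqrt{a}$ do not interfere with the count outside the two extremal values of $c$). Once this correspondence is pinned down and the two parametrizations above are chosen compatibly, the four multiplicity tuples fall out of a fixed-point count for the two involutions on $\mathbb{Z}/k\mathbb{Z}$.
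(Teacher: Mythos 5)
The paper does not prove this statement at all: Theorem~\ref{bdickson} is imported verbatim from Bilu's work (cited as \cite[Proposition 3.3]{B99}), so there is no in-paper proof to compare against. Your argument is the classical one for this fact and, as far as I can check, it is correct. The one point you flag as needing care --- that multiplicities in $x$ correspond exactly to orbit structure of $z\mapsto a/z$ --- does close up cleanly: writing $D_k(x,a)-c=\prod_i(x-x_i)^{m_i}$ and substituting $x-x_i=(z-z_i)(z-a/z_i)/z$ gives the polynomial identity $\prod_i\bigl((z-z_i)(z-a/z_i)\bigr)^{m_i}=(z^k-w)\bigl(z^k-a^k/w\bigr)$, and since each factor $z^k-\beta$ with $\beta\neq 0$ has simple roots, the right-hand side is squarefree unless $w=a^k/w$; this forces all $m_i=1$ away from $c=\pm 2a^{k/2}$, and when $w^2=a^k$ it forces $m_i=2$ for two-element orbits and $m_i=1$ for fixed points of the involution, exactly as you use. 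Your fixed-point counts for the two involutions on $\Z/k\Z$ (solving $2j\equiv 0$ and $2j\equiv -1 \pmod k$) are right, and the resulting tuples sum to $k$ in each case, confirming the four types. Distinct orbits give distinct $x$-values because $x$ determines the unordered pair $\{z,a/z\}$ as the roots of $z^2-xz+a=0$, so no accidental merging occurs; and $z=0$ never lies in $S_1\cup S_2$ since $w\neq 0$. In short: a correct, self-contained proof of a result the paper only quotes.
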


What follows is a technical lemma which will be
needed in the proof of Theorem \ref{THM}.

\begin{lemma}\label{edicksonepower}
The polynomial $E_n(cx+d)$ is neither of the form $ux^{q}+v$ with
$q\geq 3$, nor of the form $uD_k(x,a)+v$ with $k>4$, where $c, u\in\Q\setminus\left\{0\right\}$, $d, v\in\Q$.
\end{lemma}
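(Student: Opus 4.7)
The plan is to argue by contradiction in each of the two cases and derive a conflict with Theorem \ref{RAK}, which applies since the lemma is invoked inside the proof of Theorem \ref{THM} with $n\geq 7$ and therefore guarantees that $E_n(y)+b$ has at least three simple complex zeros for every $b\in\C$. The common thread is the trivial observation that a linear substitution $y=cx+d$ with $c\neq 0$ preserves the multiplicity structure of the complex roots, so in particular the number of simple roots of $E_n(cx+d)-\alpha$ equals the number of simple roots of $E_n(y)-\alpha$.

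For the first form, suppose $E_n(cx+d)=ux^q+v$ with $q\geq 3$. Comparing degrees forces $q=n$, so $E_n(cx+d)-v=ux^n$ has a single distinct complex root of multiplicity $n$; pulling back through the linear substitution, $E_n(y)-v$ then has only one distinct complex root, contradicting Theorem \ref{RAK}. For the second form, suppose $E_n(cx+d)=uD_k(x,a)+v$ with $k>4$, so that $k=n\geq 7$. By Theorem \ref{bdickson} the two extrema of $D_n(x,a)$ are $\pm 2a^{n/2}$, and the three possible multiplicity types $(1,2,\ldots,2)$, $(1,1,2,\ldots,2)$, $(2,\ldots,2)$ show that for either choice of sign $\varepsilon=\pm 2a^{n/2}$ the polynomial $D_n(x,a)-\varepsilon$ has at most two simple roots. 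Setting $c_0=v+u\varepsilon$, the identity
\[
E_n(cx+d)-c_0=u\bigl(D_n(x,a)-\varepsilon\bigr)
\]
forces $E_n(y)-c_0$ to have at most two simple complex roots, again in contradiction with Theorem \ref{RAK}.

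There is no serious obstacle. The only point worth flagging is that when $n$ is odd the quantity $a^{n/2}$, and hence the critical value $c_0$, may be irrational; this is harmless because Theorem \ref{RAK} is formulated over arbitrary $b\in\C$. The proof is thus a compact combination of the uniform three-simple-zero lower bound of Theorem \ref{RAK} with either the trivial observation that $ux^n$ has a single distinct root, or with the extremal multiplicity description of Dickson polynomials in Theorem \ref{bdickson}.
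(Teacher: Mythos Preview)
Your argument is correct for $n\geq 7$, and for the Dickson case it is essentially the paper's own proof. But the lemma as stated carries no hypothesis $n\geq 7$: the condition $q\geq 3$ only forces $n\geq 3$, and $k>4$ only forces $n\geq 5$. So the cases $n\in\{3,4,5,6\}$ for the power form and $n\in\{5,6\}$ for the Dickson form are not covered by your appeal to Theorem~\ref{RAK}. You justify the restriction by noting that the lemma is only applied with $n\geq 7$ in Theorem~\ref{THM}, which is true, but then you have proved a weaker statement than the one asserted.

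The paper closes these gaps as follows. For the power form it does not use Theorem~\ref{RAK} at all: differentiating $E_n(cx+d)=ux^n+v$ gives that $ncE_{n-1}(cx+d)$ has a root of multiplicity $n-1\geq 2$, which contradicts Proposition~\ref{PROP}(iv) (only $E_5$ has any multiple root, and certainly not one of multiplicity $5$). This handles every $n\geq 3$ uniformly. For the Dickson form with $n\in\{5,6\}$ the paper simply checks by direct computation that $E_n(cx+d)$ cannot equal $uD_n(x,a)+v$; the closing remark that $E_4(cx+\tfrac12)=c^4D_4(x,3/(8c^2))+\tfrac{1}{32}$ explains why the threshold $k>4$ is sharp. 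Your use of Theorem~\ref{RAK} for the power case is a perfectly good alternative when $n\geq 7$, but the derivative argument is what makes the lemma hold in the stated generality.
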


\begin{proof}
Suppose $E_n(cx+d)=ux^{q}+v$ with $q\geq 3$, so $q=n$. It follows that the
polynomial $(E_n(cx+d)-v)'=ncE_{n-1}(cx+d)$ has a zero with multiplicity $n-1$. This is not possible, see Proposition \ref{PROP}.
Now assume that $E_n(cx+d)=uD_k(x,a)+v$ and $n\geq 7$. So, $k=n$ and
$$D_n(x,a)\pm 2a^{\frac{n}{2}}=\frac{1}{u}\left(E_n(cx+d)-v\pm 2ua^{\frac{n}{2}}\right).$$
Then from Theorem \ref{RAK} it follows that $D_n(x,a)\pm
2a^{\frac{n}{2}}$ has at least three simple zeros, which contradicts
Theorem \ref{bdickson}. In the case when $n=5$ and $n=6$, a direct
calculation shows that $E_n(cx+d)$ is not of the form
$uD_n(x,a)+v$. We remark that
$$E_4\left(cx+\frac{1}{2}\right)=c^{4}D_4\left(x,\frac{3}{8c^{2}}\right)+\frac{1}{32}.$$
\end{proof}

\begin{proof}[Proof of Theorem \ref{THM}]
We recall
$$-1^k +2 ^k + \cdots + (-1)^{n} n^k=\frac{E_k(0)+(-1)^n E_k(n+1)}{2}.$$
Therefore, the study of integer solutions of the equation \eqref{EQ} reduces to the study of solutions of the equations
\begin{align}
\frac{E_k(0)+E_k(2x+1)}{2}&=g(y) \label{eqnepar}\\
\frac{E_k(0)-E_k(2x)}{2}&=g(y). \label{eqpar}
\end{align}
in integers  $x, y$ with $x$ positive. We can study these two equations at once by writing
\begin{equation}\label{EQUATION}
f(E_k(h(x)))=g(y),
\end{equation}
where the equation \eqref{eqnepar} corresponds to polynomials
\begin{equation}\label{Case1}
f(x)=\frac{E_k(0)+x}{2}, \quad h(x)=2x+1, 
\end{equation}
and the equation \eqref{eqpar} to polynomials
\begin{equation}\label{Case2}
f(x)=\frac{E_k(0)-x}{2}, \quad h(x)=2x. 
\end{equation}
We further denote
\begin{equation}\label{composition}
F_k(x)=f(E_k(h(x))).
\end{equation}
If $\deg g=2$, then the equation \eqref{EQUATION} can be re-written as
$$df(E_k(h(x)))=ay^{2}+by+c$$
with $a, b, c, d\in\Z$, $a, d\neq 0$, which can be transformed into
\begin{equation}\label{g21}
uE_k(h(x))+v=(2ay+b)^{2},
\end{equation}
where $u, v\in \Q$ and $u\neq 0$. From Theorem
\ref{Baker} and Theorem \ref{RAK}, we get that the equation
\eqref{g21} has only finitely many integer solutions $x, y$,
which can be effectively determined, since $k\geq 7$ by assumption.

Let $\deg g>2$. Suppose that the equation
$\eqref{EQUATION}$ has infinitely many integer solutions.
By Theorem \ref{BT} there exists $\varphi(x)\in \Q[x]$,
linear polynomials $\lambda(x), \mu(x) \in \Q[x]$ and a
standard pair $(f_{1}(x),g_{1}(x))$ over $\Q$ such that
\begin{equation}\label{felbontas}
F_k(x)=\varphi(f_{1}(\lambda(x))), \quad
g(x)=\varphi(g_{1}(\mu(x))).
\end{equation}
Then from Theorem \ref{EP} and \eqref{composition} we get that either
 $\deg \varphi =k$ or $\deg \varphi=1$ or $\deg \varphi=k/2$.

\underline{Consider the case $\deg \varphi=k$}. Then from
\eqref{felbontas} we get $\deg f_{1}=1$, so
$F_k(x)=\varphi(t(x))$, where $t(x)\in\Q[x]$ is a linear polynomial. Then clearly
$$F_{k}\left(t^{\langle-1\rangle}(x)\right)=\varphi(x),$$
wher $t^{\langle-1\rangle}$ denotes the inverse of $t$ with respect to functional composition. Then from \eqref{felbontas} we get
$$g(x)=\varphi(g_{1}(\mu(x)))=F_{k}\left(t^{\langle-1\rangle}\left(g_{1}(\mu(x))\right)\right),$$
that is
\begin{equation}
g(x)=f(E_k(p(x)))
\end{equation}
with
$p(x)=h\left(t^{\langle-1\rangle}\left(g_{1}(\mu(x))\right)\right)\in\Q[x]$. In this particular case the equation \eqref{EQUATION} turns into
\begin{equation}\label{degfk}
f(E_k(h(x)))=f(E_k(p(y)).
\end{equation}
If we let $r(x)\in \Q[x]$ be an integer valued polynomial which attains only positive values and $p(x)=h(r(x))$, then the equation \eqref{degfk} clearly has infinitely many positive integer solutions.

\underline{Consider the case $\deg \varphi=1$.}
Let $\varphi(x)=\varphi_{1}x+\varphi_{0}$, where $\varphi_{1}, \varphi_{0}\in\Q$ and $\varphi_{1}\neq 0$. From \eqref{felbontas} it follows that
$$F_k\left(\lambda^{\langle-1\rangle}(x)\right)=\varphi(f_1(x))=\varphi_1f_1(x)+\varphi_0$$
and from \eqref{composition} it follows that
$$f\left(E_k\left(h\left(\lambda^{\langle-1\rangle}(x)\right)\right)\right)=F_k\left(\lambda^{\langle-1\rangle}(x)\right)=\varphi_1f_1(x)+\varphi_0.$$
Since $f(x), h(x), \lambda^{\langle-1\rangle}(x) \in \Q[x]$ are linear
polynomials, we have that
\begin{equation}\label{treci}
E_k(cx+d)=uf_1(x)+v
\end{equation}
for some $c, d, u, v\in \Q$, $c, u\neq 0$. Next we study five kinds of standard pairs of polynomials over $\Q$. 

First consider the case when $(f_{1}(x),g_{1}(x))$
 is a standard pair over $\Q$ of the first kind. From
\eqref{treci} we get that either $E_k(cx+d)=ux^{t}+v$ or $E_k(cx+d)=uax^{r}q(x)^{t}+v$,
where $0\leq r<t$, $(r,t)=1$ and $r+\deg q>0$.
In the former case we get a contradiction by Lemma
\ref{edicksonepower}, since by assumption $k=t\geq 7$. In the latter case, from Theorem \ref{RAK} we
get $t\leq 2$, contradiction.

Let now  $(f_{1}(x),g_{1}(x))$ be a
standard pair over $\Q$ of the second kind. Then either $E_{k}(cx+d)=ux^{2}+v$ or $E_{k}(cx+d)=u\left(ax^{2}+b\right)q(x)^{2}+v$.
The former case is not possible since $k\geq 7$ and the latter case is not possible by Theorem \ref{RAK}.

Next let $(f_{1}(x),g_{1}(x))$ be a standard pair of the
third or of the fourth kind. Then by \eqref{treci} it follows that
$$E_k(cx+d)=uD_k(x,w)+v,$$
where $w=a^{t}$ or $w=a$. Since $k\geq 7$ by assumption, we have a contradiction with Lemma \ref{edicksonepower} 

Finally, $(f_1(x), g_1(x))$ can not be a standard pair over $\Q$ of the fifth kind since $k\geq 7$.

\underline{Finally, consider the case $\deg \varphi=k/2$}. Then $k=2s$ and $\deg f_1 =2$. From \eqref{composition}
and \eqref{felbontas} we get
\begin{equation}\label{eqlambda}
E_k(x)=f^{\langle-1\rangle}\left(\varphi(f_1(\tau(x)))\right),
\end{equation}
where $\tau(x)$ is a linear polynomial in $\Q[x]$. Since  $\deg f_1=2$ and
$k\geq 7$, we have a nontrivial decomposition of $E_k(x)$ in \eqref{eqlambda}. 
From Theorem \ref{EP} it follows that there exists a linear
polynomial $u(x)$ such that
\begin{equation}\label{felbontas1}
\varphi(x)=f\left(\widetilde{E}_{s}\left(u(x)\right)\right), \quad
u\left(f_{1}(\tau(x))\right)=\left(x-\frac{1}{2}\right)^{2},
\end{equation}
which together with \eqref{felbontas} implies 
\begin{equation}\label{di}
g(x)=f\left(\widetilde{E}_{s}(u(g_1(\mu(x))))\right).
\end{equation}
Next we study five kinds of standard pairs over $\Q$. 

First consider the case when $\left(f_{1}(x),g_{1}(x)\right)$ is a
standard pair over $\Q$ of the first kind. If $f_1(x)=x^{t}$, then $t=2$ and hence $r=1$. Then $f_{1}(x)=x^2$ and $g_{1}(x)=axq(x)^{2}$ for some $q(x)\in \Q[x]$. Then from \eqref{felbontas1} we get $u(x)=x/c^{2}$ and hence from \eqref{di} it follows that
$$g(x)=f\left(\widetilde{E}_{s}\left(\frac{a\mu(x)q(\mu(x))^{2}}{c^{2}}\right)\right),$$
which we can write as
\begin{equation}\label{dec3}
g(x)=f\left(\widetilde{E}_{s}\left(\delta(x)p(x)^{2}\right)\right)
\end{equation}
with $\delta(x)=a\mu(x)/c^2$ and $p(x)=q(\mu(x)).$ Clearly $\delta(x), p(x)\in\Q[x]$ and $\deg\delta=1$. 
Now \eqref{EQUATION} turns into
\begin{equation}\label{elll}
f\left(\widetilde{E}_{s}\left(\left(h(x)-\frac{1}{2}\right)^{2}\right)\right)=
f\left(\widetilde{E}_{s}\left(\delta(y)p(y)^{2}\right)\right).
\end{equation}
We easily find a choice of parameters such that the equation \eqref{elll} has infinitely many positive integer solutions. For example, let $\delta(x)=x$, let $r(x)$ be a polynomial which attains positive odd integer values for every $x\in\N$ and let $p(x)=r(x)-1/2$. Either $h(x)=2x$ or $h(x)=2x+1$, see \eqref{Case1} and \eqref{Case2}, which corresponds to solutions
\begin{align*}
x&=\frac{(4k+3)r\left((4k+3)^2\right)-(2k+1)}{2},  \quad y=(4k+3)^{2},\\
\shortintertext{and}
x&=\frac{(4k+1)r\left((4k+1)^2\right)-(2k+1)}{2},  \quad y=(4k+3)^2,
\end{align*}
of the equation \eqref{elll} for any $k\in \N$, respectively.
Since $\deg f_1=2$, when $(f_{1}(x), g_{1}(x))=\left(ax^{r}q(x)^{t},x^{t}\right)$ with $0\leq r<t$, $(r, t)=1$, $r+\deg q>0$, then either
$r=0$, $t=1$ and $\deg q=2$ or $r=2$, $t\geq 3$ odd and $q(x)$ is a constant polynomial.
In the former case we have $g_{1}(x)=x$ and hence from \eqref{di} we get
$$g(x)=f\left(\widetilde{E}_{s}(u(g_1(\mu(x))))\right)=f\left(\widetilde{E}_{s}\left(\delta(x)p(x)^{2}\right)\right)$$
where $p(x)=1$ and $\delta(x)\in \Q[x]$ is a linear polynomial, which is a decomposition of $g$ that already appeared, see \eqref{dec3}. In the latter case we have $f_1(x)=bx^{2}$ and from \eqref{felbontas1} we get $u(x)=x/(bc^{2})$,
where $b\in\Q\setminus\{0\}$. Then
\begin{equation*}
\nonumber{}g(x)=f\left(\widetilde{E}_{s}\left(\frac{(\mu(x))^{t}}{bc^{2}}\right)\right),
\end{equation*}
which we can write as
\begin{equation} \label{2gx}
g(x)=f\left(\widetilde{E}_{s}\left(\gamma\delta(x)^{t}\right)\right),
\end{equation}
where $\gamma=1/(bc^{2})$, $\delta(x)=\mu(x)$. So, $\gamma\in \Q$, $\delta(x)$ is a linear polynomial in $\Q[x]$ and $t$ is odd. Now \eqref{EQUATION} turns into
\begin{equation}\label{eIV}
f\left(\widetilde{E}_{s}\left(\left(h(x)-\frac{1}{2}\right)^{2}\right)\right)=
f\left(\widetilde{E}_{s}\left(\gamma\delta(y)^{t}\right)\right).
\end{equation}
We easily find a choice of parameters such that the equation \eqref{eIV} has infinitely many integer solutions. 
For example, let $\gamma=1/4$, $\delta(x)=x$ and $t\geq 3$ odd. For $h(x)=2x$, and $h(x)=2x+1$, 
\begin{equation*}
x=\frac{(4k-1)^{t}+1}{4}, \quad y=(4k-1)^{2}
\end{equation*}
and
\begin{equation*}
x=\frac{(4k+1)^{t}-1}{4}, \quad y=(4k+1)^{2}
\end{equation*}
are solutions in positive integers of the equation \eqref{eIV}, respectively.

Next suppose that $(f_{1}(x),g_{1}(x))$ is a
standard pair of the second kind over $\Q$.  If $f_{1}(x)=(ax^{2}+b)q(x)^{2}$,
then $g_{1}(x)=x^{2}$, so from \eqref{di} we get
\begin{equation*}
g(x)=f\left(\widetilde{E}_{s}\left(u_{1}\mu(x)^{2}+u_{0}\right)\right),
\end{equation*}
which we can re-write as
\begin{equation} \label{gx21}
g(x)=f\left(\widetilde{E}_{s}\left(\left(a\delta(x)^{2}+b\right)p(x)^{2}\right)\right)
\end{equation}
with $a=u_1$, $b=u_0$, $\delta(x)=\mu(x)$ and $p(x)=1$. So, $p(x), \delta(x)\in \Q[x]$ and $\deg \delta=1$.
If $f_1(x)=x^{2}$, then from \eqref{felbontas1} we get $u(x)=x/c^{2}$ and hence
$$g(x)=f\left(\widetilde{E}_{s}\left(\frac{\left(a\mu(x)^{2}+b\right)q(\mu(x))^{2}}{c^{2}}\right)\right),$$
which we can re-write as
\begin{equation}\label{3gx}
g(x)=\left(\widetilde{E}_{s}\left(\left(a\delta(x)^{2}+b\right)p(x)^{2}\right)\right),
\end{equation}
with $p(x)=q(\mu(x))/c$ and $\delta(x)=\mu(x)$. Clearly $p(x), \delta(x)\in \Q[x]$ and $\deg \delta=1$. 
Then \eqref{EQUATION} turns into
\begin{equation}\label{eV}
f\left(\widetilde{E}_{s}\left(\left(h(x)-\frac{1}{2}\right)^{2}\right)\right)=
f\left(\widetilde{E}_{s}\left((a\delta(y)^{2}+b)p(y)^{2}\right)\right).
\end{equation}
Let $\delta(x)=x$, let $r(x)$ be any integer valued polynomial which attains only positive values and
$p(x)=4r(x)+1$. Let $a=1/2$ and $b=1/4$. Let $a_n$ and $b_n$ be such that
$$a_n+b_n\sqrt{2}=(3+2\sqrt{2})^{n}, \quad n\in\N.$$
For $h(x)=2x$, and $h(x)=2x+1$,
\begin{equation*}
x=\frac{a_{2n+1}(4r(y)+1)+1}{4}, \quad  y=b_{2n+1}
\end{equation*}
and
\begin{equation*}
x=\frac{a_{2n}(4r(y)+1)-1}{4},  \quad      y=b_{2n}, 
\end{equation*}
are solutions of the equation \eqref{eV}, respectively.
Let now $(f_{1}(x),g_{1}(x))$ in \eqref{felbontas} be a standard pair of the third kind over $\Q$.
Then 
$$(f_{1}(x),g_{1}(x))=\left(D_{2}\left(x,a^{t}\right),D_{t}\left(x,a^{2}\right)\right)$$
with odd $t$. Substituting
$f_{1}(x)=D_{2}(x,a^{t})=x^{2}-2a^{t}$ into $\eqref{felbontas1}$,
we get $u(x)=(x+2a^{t})/c^{2}$, so
\begin{equation}\label{di2}
g(x)=f\left(\widetilde{E}_{s}\left(\frac{D_{t}\left(\mu(x),a^{2}\right)+2a^{t}}{c^{2}}\right)\right).
\end{equation}
From Theorem \ref{bdickson} it follows that the polynomial $D_{t}(\mu(x),a^{2})/c^{2}$
has two extrema and those are $\pm 2a^{t}/c^{2}$. Since $t$ is odd, both extrema are
of type $(1,2,2,\ldots ,2)$. From \eqref{di2} we deduce 
\begin{equation}
g(x)=f\left(\widetilde{E}_{s}\left(\delta(x)p(x)^{2}\right)\right)
\end{equation}
with $\delta(x)$, $p(x)\in\Q[x]$ and $\deg \delta=1$, which is a decomposition that already appeared, see \eqref{dec3}.

Let now $(f_{1}(x),g_{1}(x))$ be a standard pair
of the fourth kind over $\Q$. Then
$$(f_{1}(x),g_{1}(x))=(a^{-1}D_{2}(x,a),b^{-t/2}D_{t}(x,b))$$
with even $t$. From $\eqref{felbontas1}$ we get $u(x)=(ax+2a)/c^{2}$, which together with \eqref{felbontas} implies
$$g(x)=f(\widetilde{E}_{s}(u(g_{1}(\mu(x)))))=
f\left(\widetilde{E}_{s}\left(\frac{ab^{-t/2}D_{t}(\mu(x),b)+2a}{c^{2}}\right)\right).$$
\noindent The extrema of $ab^{-t/2}D_{t}(\mu(x),b)/c^{2}$ are $\pm
2b^{t/2}ab^{-t/2}/c^{2}=\pm 2a/c^{2}$, and the extremum $-2a/c^{2}$
is of type $(2,2,\ldots ,2)$ by Theorem \ref{bdickson}.
Therefore 
\begin{equation}
g(x)=f\left(\widetilde{E}_{s}\left(p(x)^{2}\right)\right)
\end{equation} with $p(x)\in\Q[x]$.
Then the equation \eqref{EQUATION} turns into
\begin{equation}\label{eql}
f\left(\widetilde{E}_{s}\left(\left(h(x)-\frac{1}{2}\right)^{2}\right)\right)=
f\left(\widetilde{E}_{s}\left(p(y)^{2}\right)\right).
\end{equation}
If we let $r(x)$ be an integer valued polynomial which attains only positive values and $p(x)=2r(x)-1/2$ if $h(x)=2x$ and $p(x)=2r(x)+1/2$ if $h(x)=2x+1$, then $(x, y)=(r(k), k)$ are solutions of the equation \eqref{eql} for any $k\in \N$.

Since $\deg f_1=2$, the pair $(f_1(x), g_1(x))$ can not be a standard pair over $\Q$ of the fifth kind.
\end{proof}

\subsection*{Acknowledgements}
Dijana Kreso supported by the Austrian Science Fund (FWF): W1230-N13 and NAWI Graz. Csaba Rakaczki was supported, in part, by the Hungarian Academy of Sciences under OTKA Grant K75566 and the Project TAMOP-4.2.1.B-10/2/KONV-2010-0001,
which was supported by the European Union and co-financed by the European Social Fund.

\bibliographystyle{amsplain}
\bibliography{Dijana2}

\end{document}